\newcommand{\dd}{\mathrm{d}}
\newcommand{\di}{\mathrm{div}}
\newtheorem{lemma}{Lemma}[section]
\newtheorem{theorem}{Theorem}[section]
\newtheorem{definition}{Definition}[section]
\newtheorem{remark}{Remark}[section]
\numberwithin{equation}{section}
\begin{document}
	
	\title[Uniqueness of transonic shock solutions]{Uniqueness of transonic shock solutions in general approximate nozzles for steady potential flow }
	
        \author{Minghong Han}
        \author{Bingsong Long}	
	\author{Hairong Yuan}

\address[Minghong Han]{Center for Partial Differential Equations, School of Mathematical Sciences, East China Normal University, Shanghai 200241, China}\email{\tt 52275500054@stu.ecnu.edu.cn}

\address[Bingsong Long]{School of Mathematics and Statistics, Huanggang Normal University, Hubei 438000, China}\email{\tt longbingsong@hgnu.edu.cn}

\address[Hairong Yuan]{School of Mathematical Sciences,  Key Laboratory of Mathematics and Engineering Applications (Ministry of Education) \& Shanghai Key Laboratory of PMMP,  East China Normal University, Shanghai 200241, China}\email{\tt hryuan@math.ecnu.edu.cn}

	\keywords{Uniqueness; Transonic shock; Potential flow; Comparison principle; Riemannian manifold.}

	\subjclass[2020]{35J25, 35B35, 35B50, 76N10, 76H05}

	\date{\today}

	\begin{abstract}
We study the uniqueness of solutions with a transonic shock in a two-dimensional Riemannian manifold with a special metric, which can be regarded as an approximate model of the general physical nozzles, within a class of transonic shock solutions for steady potential flow. We first prove the uniqueness of these solutions on the unit 2-sphere: for given uniform supersonic upstream flow at the entry, there exists a unique uniform pressure at the exit such that a transonic shock solution exists in the sphere, which is unique modulo a translation. A similar result is then extended to a class of manifolds. Mathematically, it is equivalent to showing a uniqueness theorem for a free boundary problem of a second-order elliptic-hyperbolic mixed-type partial differential equation in the general approximate nozzles. The proof is based on the maximum/comparison principle with a suitable special transonic shock solution as a comparison function.
	\end{abstract}

	\allowbreak
	\allowdisplaybreaks	
	\maketitle
	
%	\tableofcontents %disable for short paper

\section{Introduction}\label{sec1}
It is known that the study on nozzle flows plays a fundamental role in aerodynamics and partial differential equations due to their numerous applications in practice, and closely connection with many difficult mathematical problems, such as mixed type equations and free boundary problems \cite{supshockCF,mixM}. Physically, one concerns structure of flow fields in a nozzle with transonic shocks by prescribing supersonic flow on the nozzle entry and a suitable pressure on the exit. Many efforts to analyze mathematically the stability of these special transonic shock solutions in different nozzles can be found in \cite{tssdivBF,tssCF,eutssdivC,eutssductCY,FGZ,FXeuadmi,GLY,LXY,WXX,XYY} and references cited therein. However, compared to stability, the progress in the study of uniqueness is rather slow. For potential flows, some uniqueness results on subsonic solutions can be found in \cite{Lunisubduct} and \cite{LYunisubdiv} by Liu; also, Chen--Yuan \cite{2009uniductCY} showed uniqueness of transonic shock solutions in a two-dimensional or three-dimensional straight duct under modulo a translation, and Liu--Yuan\cite{unistradivYL} extended the results to divergent nozzles. Moreover, for Euler flows, Fang--Liu--Yuan \cite{FLYunieudiv} established the uniqueness of transonic shock solutions in a straight duct with finite or infinite length. A fundamental difficulty to flow in general curved divergent nozzles is the lack of available special solutions. Motivated by a significant work of Sibners \cite{no1appSS}, Yuan, with He and Liu, had constructed various interesting special solutions \cite{exampleappY} and established their uniqueness \cite{sub2soniappYL,sub2supappYH} for subsonic-sonic flow or subsonic-supersonic flow, in two-dimensional Riemannian manifolds with “convergent–divergent” metrics, which may be regarded as approximation of de Laval nozzles.

From a mathematical perspective, a general curved divergent approximate nozzle can be described as a smooth axisymmetric two-dimensional surface in $\mathbf{R}^3$ with different Gaussian curvatures. For example, the approximate de Laval nozzles have negative Gaussian curvatures, while that of approximate straight nozzles are zero. Therefore, the study of the special flows in approximate nozzles would be of particular importance for the understanding of flows in real physical nozzles. Inspired by this idea, as a continuation of \cite{unistradivYL, LYunisubdiv, sub2soniappYL}, in this paper, we study the existence and uniqueness of solutions with a transonic shock on the unit 2-sphere with positive Gaussian curvature first, and then extend the result to general two-dimensional axisymmetric manifolds.

We now describe the problem for steady irrotational isentropic perfect gas flows on the unit 2-sphere. Let $\mathbf{S}^2=\{(\theta, \varphi)~|~\theta\in[0,\pi],~\varphi \in [0,2\pi]\}$ be the  unit 2-sphere in $\mathbf{R}^3$, with the standard metric $G=g_{ij}\dd x^i\otimes\dd x^j=\dd x^1\otimes\dd x^1+\sin^2\theta~\dd x^2\otimes\dd x^2$,  where $x^1=\theta, x^2=\varphi$. Let $u$ be a vector field on $\mathbf{S}^2$ representing the velocity of the flow. Set $\bar{u}$ as the 1-form corresponding to $u$ under the metric $G$. Then  $\bar{u}$ is exact since the flow is assumed to be irrotational and $\mathbf{S}^2$ is simply connected; namely, there exists a function $\phi$ in $\mathbf{S}^2$ called as the potential, such that $ \bar{u}= \dd \phi$. Substituting this relation into the equation of conservation of mass $\di(\rho u) = 0$ and Bernoulli's law with state function $p= \rho^{\gamma}$, we have
	%equation
	\begin{align}
		&\di(\rho \nabla \phi)=0,\label{mass conservation}\\
		\frac{1}{2}|\nabla\phi|^2&+\frac{c^2}{\gamma-1}=\frac{c^2_0}{\gamma-1},\label{potential flow bernoulli}
	\end{align}
where $\gamma> 1$ and $c_0$ are two given constants; $p,\rho$ are functions on $\mathbf{S}^2$ representing respectively the scalar pressure and mass density of the flows; the sonic speed is $c=\sqrt{\gamma \rho^{ \gamma-1}}$. The operator $\nabla=(\partial_\theta,\frac{1}{\sin\theta}\partial_\varphi) $ denotes the gradient of a function on $\mathbf{S}^2$; $\di$ denotes the divergence operator on $\mathbf{S}^2$, satisfying
	\begin{align}\label{di in s2}
		\di v=\frac{1}{\sqrt{G}}\partial_i(\sqrt{G}g^{ij}v_j)=\frac{1}{\sin\theta}\big(\partial_\theta(\sin\theta v_\theta)+\partial_\varphi v_\varphi\big), 
	\end{align}
for $v=(v_\theta, v_\varphi)$, a vector field on $\mathbf{S}^2$.
	 
Applying Bernoulli's law, we get 
\begin{equation}\label{rho with phi}
    \rho=\Big(\frac{1}{\gamma}(c^2_0-\frac{1}{2}(\gamma-1)|\nabla \phi|^2)\Big)^{\frac{1}{\gamma-1}}.
\end{equation}
Therefore, the potential flow equation governing steady irrotational isentropic perfect gas flows on $\mathbf{S}^2$ reads
\begin{align}\label{potential flow in s2}
	&(c^2-\partial^2_\theta\phi)\partial_{\theta\theta}\phi+(c^2-\frac{1}{\sin^2\theta }\partial^2_\varphi\phi)\frac{1}{\sin^2\theta }\partial_{\varphi\varphi}\phi\nonumber\\
	&\quad-2\frac{1}{\sin^2\theta }\partial_{\theta}\phi\partial_{\varphi}\phi\partial_{\theta\varphi}\phi+\cot\theta(c^2+\frac{1}{\sin^2\theta }\partial^2_\varphi\phi)\partial_{\theta}\phi=0.
\end{align}
A direct computation yields that
this equation is of elliptic type if the flow is subsonic $(c^2 > (\partial_\theta\phi)^2 +(\partial_\varphi\phi)^2/\sin^2\theta)$,
and is of hyperbolic type if the flow is supersonic  $(c^2 < (\partial_\theta\phi)^2 +(\partial_\varphi\phi)^2/\sin^2\theta)$. Equivalently, by Bernoulli's law, we introduce $c_*\coloneqq\frac{2c^2_0}{\gamma+1}$, called critical sonic speed, and the flow is supersonic when $|\nabla\phi|>c_*$, while subsonic when $|\nabla\phi|<c_*$.
	%domain
	
Let us specify a domain $\Omega=\{(\theta, \varphi)~|~\theta\in[\theta_0,\theta_1],~\varphi \in [0,2\pi]\}$ in $\mathbf{S}^2$, where $\theta_0>0$ and $\theta_1<\pi$ are two constants. Define $\Sigma_i=\{\theta_i\times[0,2\pi]\}~(i=0,1)$, which are respectively the entry and exit of $\Omega$; naturally $\partial\Omega=\Sigma_0\cup\Sigma_1$. Hereafter, we assume 
	\begin{align}\label{h1}
		\partial_{\theta}\phi\geq0 ~\quad \text{on}~\partial\Omega,
	\end{align}
which means exactly that the gas flows into $\Omega$ on $\Sigma_0$ and flows out of $\Omega$ on $\Sigma_1$. We are interested in the case that the flow is uniform and supersonic on $\Sigma_0$, and subsonic on $\Sigma_1$ with uniform pressure. More specifically, for a constant $u_0>c_{*}$ and a constant $0<c_{\theta_1 }<c_{*}$, we consider the following problem:
	\begin{align}\label{p1}
		\eqref{potential flow in s2} \quad &\text{in}~\Omega,\\
		\nabla\phi=(u_0,0) \quad &\text{on} ~\Sigma_0,\label{sigma0con}\\
		|\nabla \phi|=c_{\theta_1 } \quad &\text{on}~\Sigma_1.\label{p1end}
	\end{align}
We remark that the formulation of this boundary value problem is physically natural. Since the flow is supersonic near $\Sigma_0$, i.e., the equation is hyperbolic, our choice of $\phi$ in $\eqref{sigma0con}$ makes the solution neat and unique in the class of $C^2$ supersonic flow, by standard energy estimates. On the other hand, since the equation is elliptic on $\Sigma_1$, only one boundary condition is necessary. We choose the Bernoulli-type condition $\eqref{p1end}$ because assigning the pressure is of more interest from the physical point of view $($cf. \cite{supshockCF}$)$.

We study the class of piecewise smooth solution with a transonic shock for problem $\eqref{p1}$--$\eqref{p1end}$.
\begin{definition}[Transonic shock solution of potential flow on $\mathbf{S}^2$]\label{tss}
	For a $C^1$ function $\theta_s(\varphi)$ defined in $\Omega$, let		
	\begin{align*}
		S&=\{(\theta_s(\varphi),\varphi)~|~\varphi\in[0,2\pi]\},\\
		\Omega^-&=\{(\theta,\varphi)~|~\theta_0<\theta<\theta_s(\varphi),~\varphi\in[0,2\pi]\},\\
		\Omega^+&=\{(\theta,\varphi)~|~\theta_s(\varphi)<\theta<\theta_1,~\varphi\in[0,2\pi]\}.
	\end{align*}
Then $\phi\in C^{0,1}(\Omega) \cap C^2(\Omega^-\cup~\Omega^+)$ is  called a transonic shock solution of $\eqref{p1}$--$\eqref{p1end}$, if it is supersonic in $\Omega^-$ and subsonic in $\Omega^+$, satisfying Eq.$\eqref{potential flow in s2}$ in $\Omega^-\cup~\Omega^+$ and the boundary conditions $\eqref{sigma0con}$--$\eqref{p1end}$ pointwise, and also the Rankine–Hugoniot conditions on $S$:
	%R-H
	\begin{align}
		(\rho(|\nabla \phi_-|)\nabla \phi_--\rho(|\nabla \phi_+|)\nabla \phi_+)\cdot \mathbf{n}_s=0,\label{rh1}\\
		\frac{1}{2}|\nabla \phi_-|^2+\frac{c^2_-}{\gamma-1}=\frac{1}{2}|\nabla \phi_+|^2+\frac{c^2_+}{\gamma-1},\label{rh2}
	\end{align}
	where $\phi_+(\phi_-)$ is the right (left) limit of $\phi$ along $S$, while $c_+\, (c_-)$ is the local sonic speed corresponding to  $\phi_+\, (\phi_-)$, and $\mathbf{n}_s$ is the unit normal vector of $S$ on $\mathbf{S}^2$. The curve $S$ in $\Omega$
	is also called a shock.
	\end{definition}

%Background solution
We next construct a symmetric transonic shock solution to problem $\eqref{p1}$--$\eqref{p1end}$, which only depends on $\theta$. Then, the problem can be reduced to 
       \begin{align}
		(c^2-\partial^2_\theta\phi)\partial_{\theta\theta}\phi+(\cot\theta) c^2\partial_\theta \phi=0\quad &\text{in}~\Omega^-\cup ~\Omega^+,\label{phi-ode}\\
		u(\theta_0)=u_0\quad &\text{on}~\Sigma_0,\label{phi-u0}\\
		u(\theta_1)=u_1=c_{\theta_1}\quad &\text{on}~\Sigma_1,\label{phi-u1}
	\end{align}
where $u(\theta)=\partial_\theta\phi$, and Eq.\eqref{phi-ode} can be rewritten as
	\begin{align*}
		(c^2-u^2)\partial_{\theta}u+\cot\theta c^2u=0.
	\end{align*} 
Obviously, the problem is equivalent to the Cauchy problem of the following ordinary differential equation:
	\begin{align}\label{u-ode}
		\frac{\dd u}{\dd \theta }=\cot\theta \frac{c^2u}{u^2-c^2},
	\end{align}
with the initial value
	\begin{align}\label{u0}
		u(\theta_0)=u_0 \quad \text{on}~\Sigma_0,
	\end{align}
or
	\begin{align}\label{u1}
		u(\theta_1)=u_1 \quad \text{on}~\Sigma_1,
	\end{align}
respectively in $~\Omega^- $ or $~\Omega^+$.
	
\begin{remark}[Necessity of a shock]
Without loss of generality, we set $\theta_0<\pi/2<\theta_1$. On the one hand, the $C^1$ solution $u$ of $\eqref{u-ode}$--$\eqref{u0}$, which strictly increases with $\theta \in[\theta_0, \pi/2] $, satisfying $u>u_0>c_*$, exists uniquely, by uniqueness of solutions of Cauchy problems of ordinary differential equations. Similarly, the unique $C^1$ solution $u$ of $\eqref{u-ode}$ and $\eqref{u1}$ exists  in $[\pi/2, \theta_1]$, strictly increasing with $u<u_1<c_*$. On the other hand, according to $\eqref{u-ode}$, the solution $u$ cannot reach sonic speed $c$. Thus, the solution of $\eqref{u-ode}$--$\eqref{u0}$ is supersonic, while that of $\eqref{u-ode}$ and $\eqref{u1}$ is subsonic for any $\theta\in [\theta_0, \theta_1]$. Since the speed of flow cannot match at any $\theta\in[\theta_0, \theta_1]$ without a shock, a jump of $u$ $($across a shock$)$ is necessary for the solution of problem $\eqref{phi-ode}$--$\eqref{phi-u1}$.
\end{remark}
	
Let $\theta=\theta_b$ be a shock $S_b$, and $U_-(\theta_b)\coloneqq (u_-, \rho_-),U_+(\theta_b) \coloneqq (u_+, \rho_+)$ be the left and right limits of the state along $S_b$, respectively. Here, $\theta_b\in(\theta_0,\theta_1)$ is a constant to be determined. The following Rankine–Hugoniot conditions should hold on $S_b$ :
	\begin{align}
		\rho_-u_-&=\rho_+u_+,\label{backrh1}\\
		\frac{1}{2}u_-^2+\frac{c^2_-}{\gamma-1}&=\frac{1}{2}u_+^2+\frac{c^2_+}{\gamma-1}.\label{backrh2}
	\end{align}
It should be noted that the potential function $\phi$ itself is continuous across a shock. 
	%where $\theta_{b}\in[\theta_0,\theta_1]$ is a constant.
	\begin{lemma}\label{solvable}
		There exists a transonic shock solution $\phi_b$, only depending on $\theta$, to problem $\eqref{phi-ode}$--$\eqref{phi-u1}$ with Rankine-Hugoniot conditions $\eqref{backrh1}$--$\eqref{backrh2}$.
	\end{lemma}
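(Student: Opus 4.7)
The strategy is to construct $\phi_b$ by gluing two radial branches of the reduced ODE \eqref{u-ode} across a single shock at $\theta=\theta_b$ via the Rankine--Hugoniot relations \eqref{backrh1}--\eqref{backrh2}, treating $\theta_b$ as the free parameter. Since every solution of \eqref{u-ode} automatically verifies Bernoulli's law \eqref{potential flow bernoulli} with the common constant $c_0^2/(\gamma-1)$, the entropy-type jump condition \eqref{backrh2} holds identically across any such $\theta_b$, and the construction reduces to matching the mass flux required by \eqref{backrh1}.

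First, by the analysis in the Remark, solving the Cauchy problem \eqref{u-ode}--\eqref{u0} forward from $\theta_0$ yields a strictly increasing supersonic branch $u_-$ on $[\theta_0,\pi/2]$ with $u_-\ge u_0>c_*$; the a priori bound $u^2<2c_0^2/(\gamma-1)$ from Bernoulli together with the no-crossing argument at $u=c$ extends $u_-$ onto a maximal interval $[\theta_0,\beta)$ with $\beta>\pi/2$, on which $u_-$ remains strictly supersonic. Next, for any candidate $\theta_b\in(\theta_0,\beta)$, I would set $u_-:=u_-(\theta_b)$ and invoke the classical gas-dynamic shock map, which assigns to every supersonic $u_-$ a unique subsonic partner $u_+^{\mathrm{sh}}<c_*$ satisfying $\rho(u_-)u_-=\rho(u_+^{\mathrm{sh}})u_+^{\mathrm{sh}}$; this is the standard inverse of the mass-flux function $m(u)=\rho(u)u$, which is maximized at $u=c_*$ and strictly monotone on each of $\{u<c_*\}$ and $\{u>c_*\}$. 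Solving \eqref{u-ode} forward from $(\theta_b,u_+^{\mathrm{sh}})$ on $[\theta_b,\theta_1]$ then provides the downstream subsonic profile $u_+$. Finally, setting $\phi_b(\theta):=\int_{\theta_0}^{\theta}u(s)\,ds$ with $u=u_-$ on $[\theta_0,\theta_b]$ and $u=u_+$ on $[\theta_b,\theta_1]$ delivers a Lipschitz potential, $C^2$ away from $\theta_b$, verifying \eqref{phi-ode} on each side, \eqref{phi-u0} at $\Sigma_0$, and both \eqref{backrh1}--\eqref{backrh2} across $S_b$; the exit value $u_1:=u_+(\theta_1)$ is read off from the construction, and \eqref{phi-u1} holds by choosing $\theta_b$ so that $u_+(\theta_1)=c_{\theta_1}$.

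The main obstacle is the global existence of the subsonic downstream branch all the way to $\theta_1$: a priori the subsonic solution of \eqref{u-ode} initiated at $(\theta_b,u_+^{\mathrm{sh}})$ could approach sonic speed before reaching $\theta_1$, terminating the construction. The control relies on exactly the same sign analysis as in the Remark, but now for the subsonic branch potentially crossing $\theta=\pi/2$: on $[\theta_b,\pi/2]$ the sign of $\cot\theta\cdot c^2 u/(u^2-c^2)$ forces $u_+$ to decrease and hence move farther from $c_*$, while on $[\pi/2,\theta_1]$ it increases, and non-sonicization follows from a quadrature-type bound in \eqref{u-ode} (together with the implicit assumption that $\theta_1$ is not so large that the subsonic branch can re-accelerate to $c_*$). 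Once this continuation is in hand, the map $\theta_b\mapsto u_+(\theta_1)$ is continuous, so the intermediate value theorem yields the admissible $\theta_b$ realizing any $c_{\theta_1}$ in its range, producing the desired $\phi_b$.
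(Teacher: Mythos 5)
Your construction --- supersonic branch of \eqref{u-ode} from $\Sigma_0$, the monotone mass-flux shock map at $\theta_b$ (with \eqref{backrh2} automatic from Bernoulli), then the subsonic branch continued to $\Sigma_1$ --- is correct and is essentially the paper's own proof, which packages the same information in the first integral $\rho u\sin\theta=\mathrm{const}$, i.e.\ \eqref{uomega-}--\eqref{uomega+}. One cosmetic remark: your closing intermediate-value step is vacuous because the map $\theta_b\mapsto u_+(\theta_1)$ is in fact constant (this is exactly the non-uniqueness established in Subsection \ref{subsec2.1}), but that does not harm the existence claim, which the paper likewise asserts only ``for suitable $u_1$''.
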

\begin{proof}
By integrating \eqref{u-ode} with respect to $\theta$, we get
	\begin{align}\label{firstin}
		\frac{\ln(2c^2_0-(\gamma-1)u^2)}{1-\gamma}-\ln u=\ln \sin\theta +C,
	\end{align}
where $C$ is a constant. Inserting the initial values $\eqref{u0}$ and $\eqref{u1}$ into $\eqref{firstin}$, the solution $u(\theta)$ satisfies the following algebraic equations in $\Omega^-$ and $\Omega^+$ respectively:
	\begin{align}
	(2c^2_0-(\gamma-1)u^2)(u\sin\theta)^{\gamma-1}&=(2c^2_0-(\gamma-1)u^2_0)(u_0\sin\theta_0)^{\gamma-1}~~\quad \text{in}~\Omega^-,\label{uomega-}\\
		(2c^2_0-(\gamma-1)u^2)(u\sin\theta)^{\gamma-1}&=(2c^2_0-(\gamma-1)u^2_1)(u_1\sin\theta_1)^{\gamma-1}~~\quad \text{in}~\Omega^+.\label{uomega+}
	\end{align}
For any fixed $\theta_b\in [\theta_0,\theta_1]$, $U_-(\theta_b)$ is obtained by solving $\eqref{uomega-}$, and there exists a unique $U_+(\theta_b)$  such that the above Rankine–Hugoniot conditions hold for $(U_-(\theta_b), U_+(\theta_b))$. Furthermore, $U_+(\theta_b)$ is subsonic and the physical entropy condition $p_-(\theta_b) < p_+(\theta_b)$ holds. (The detailed calculation is similar to \cite[Proposition 3]{backremarkY} without the condition deduced from conservation of momentum equation.) Finally, for suitable $u_1$,  a transonic shock solution exists by \eqref{uomega+}.
\end{proof}

\begin{remark}
    In fact, by $\eqref{uomega+}$, $u_1$ satisfies $\eqref{u-ode}$, with $u_+$ as the initial value on $S_{b}$. Thus, in most cases, $u_1$ cannot be given in an arbitrary way; that is, the problem  $\eqref{phi-ode}$--$\eqref{phi-u1}$ is ill-posed even in the class of discontinuous solutions. 
\end{remark} 

The transonic shock solution $\phi_b$, called a background solution, will be proven to be non-unique in Subsection \ref{subsec2.1}. However, we can show that all the transonic shock solutions of $\eqref{p1}$--$\eqref{p1end}$ (as defined in Definition \ref{tss}) belonging to this class of solutions are unique modulo a translation.   
	
	\begin{theorem}[Main theorem]\label{unique}
		Under the assumption $\eqref{h1}$, for given $u_0\in(c_*, \sqrt{2c_0^2/\gamma-1})$, problem $\eqref{p1}$--$\eqref{p1end}$ is solvable for
		transonic shock solutions in the sense of Definition $\ref{tss}$ if and only if $c_{\theta_1} = u_1$, with $u_1$ being a constant in
		$(0,c_*)$ determined by $u_0, c_0$, and $ \gamma >1$. In addition, the solution is unique modulo a translation: it is exactly $\phi_b$ with $\theta_s\equiv\theta_b \in (\theta_0,\theta_1)$.
	\end{theorem}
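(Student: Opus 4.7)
My plan is to follow the author's hint and use $\phi_b$ as a comparison function, proceeding in three steps.
\textbf{Step 1: Rigidity of the supersonic part.} In $\Omega^-$ equation \eqref{potential flow in s2} is strictly hyperbolic with $\theta$ playing the role of time, and $\Sigma_0$ is non-characteristic. The Cauchy data $\nabla\phi|_{\Sigma_0}=(u_0,0)$ is invariant under azimuthal rotations. Differentiating \eqref{potential flow in s2} in $\varphi$, the function $v:=\partial_\varphi\phi$ satisfies a linear homogeneous second-order hyperbolic equation with zero Cauchy data on $\Sigma_0$; a standard $L^2$ energy estimate along characteristic cones then forces $v\equiv 0$ on the domain of determinacy of $\Sigma_0$, which covers $\Omega^-$. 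Consequently $\phi|_{\Omega^-}=\phi_b^-(\theta)+C_0$ for an additive constant $C_0$ (the ``translation''), and the upstream state $(u_-,\rho_-)$ along the shock $S=\{\theta=\theta_s(\varphi)\}$ is determined pointwise by $\theta_s$ through the 1-D profile of Lemma \ref{solvable}.

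\textbf{Step 2: The shock is a circle of latitude.} Let $\theta_M:=\max_\varphi\theta_s(\varphi)$, attained at $\varphi_M$, and compare with the background $\phi_b$ having $\theta_b=\theta_M$, translated additively so that $\phi(\theta_M,\varphi_M)=\phi_b(\theta_M)$. Since $\theta_s'(\varphi_M)=0$, the shock $S$ is tangent to the parallel $\{\theta=\theta_M\}$ at this touching point; the shock normal is $\partial_\theta$ and the Rankine--Hugoniot system \eqref{rh1}--\eqref{rh2} reduces to the 1-D relations \eqref{backrh1}--\eqref{backrh2}, yielding $\nabla\phi(\theta_M,\varphi_M)=\nabla\phi_b(\theta_M)$. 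In the annular comparison domain $D:=\{\theta_M<\theta<\theta_1\}\subset\Omega^+$, both $\phi$ and $\phi_b$ are subsonic, so the difference $w:=\phi-\phi_b$ solves a linear uniformly elliptic equation with $C^\alpha$ coefficients (convex combination of $\nabla\phi$ and $\nabla\phi_b$ in \eqref{potential flow in s2}). On the outer boundary $\Sigma_1$, $|\nabla\phi|^2=|\nabla\phi_b|^2=c_{\theta_1}^2$ linearizes to an oblique condition with strictly positive leading coefficient $\partial_\theta\phi+\partial_\theta\phi_b>0$. Since $w$ and $\nabla w$ both vanish at $(\theta_M,\varphi_M)$, Hopf's boundary-point lemma yields $w\equiv 0$ in $D$, whence $\theta_s\equiv\theta_M$ on an open arc about $\varphi_M$; the parallel argument at $\theta_m:=\min_\varphi\theta_s$ together with a connectedness/continuation argument then propagates this to all of $[0,2\pi]$, forcing $\theta_s\equiv\theta_b$ constant.

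\textbf{Step 3: Subsonic uniqueness and identification of $u_1$.} With $\theta_s\equiv\theta_b$, the RH relations on $S$ are exactly \eqref{backrh1}--\eqref{backrh2}, $U_+(\theta_b)$ is determined, and $\phi|_S=\phi_b|_S+C_0$. Both $\phi$ and $\phi_b+C_0$ are then subsonic solutions of the same elliptic equation in $\Omega^+=\{\theta_b<\theta<\theta_1\}$ with identical Dirichlet data on $S$ and the common oblique exit condition; the strong maximum principle plus Hopf's lemma applied to their difference gives $\phi\equiv\phi_b+C_0$ in $\Omega^+$. Finally, the conserved quantities $\rho u\sin\theta$ (mass flux) and Bernoulli's invariant are preserved by $\phi_b$ and across the circular shock, so evaluating at $\theta_1$ expresses $u_1=|\nabla\phi_b|(\theta_1)$ as the unique subsonic root of the algebraic system determined by $(u_0,c_0,\gamma)$ (and the fixed geometry); this identifies $c_{\theta_1}=u_1$ and proves the ``only if'' direction simultaneously with the uniqueness.

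\textbf{Main obstacle.} The decisive step is Step 2. The delicate points are (a) verifying uniform ellipticity of the linearized equation on the comparison domain $D$ (which holds because $\phi_b$ is strictly subsonic there, away from the sonic line at the shock), (b) checking that the linearization of \eqref{rh1} at the touching point produces a genuinely oblique operator with the correct sign for Hopf's lemma, and (c) ensuring that the matching of $(\phi,\nabla\phi)$ with $(\phi_b,\nabla\phi_b)$ at $(\theta_M,\varphi_M)$ is sufficient for a true boundary-point contradiction rather than merely tangential contact. Once these technical ingredients are in place, the rest reduces to classical elliptic maximum-principle theory.
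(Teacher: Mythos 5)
Your Step 2 contains the decisive gap, and it is exactly the ingredient the paper supplies and you do not. Hopf's boundary-point lemma cannot be invoked merely because $w=\phi-\phi_b$ and $\nabla w$ vanish at the single touching point $(\theta_M,\varphi_M)$: to get a contradiction from $\nabla w\cdot\mathbf{n}=0$ you must first know that this point is an extremum of $w$ over the whole comparison domain $D=\{\theta_M<\theta<\theta_1\}$, and for that you need sign information for $w$ on the rest of the inner boundary $\{\theta=\theta_M\}$ (or on the shock $S$). Your proposal gives none; on the inner parallel there is no boundary condition for $w$ at all, so the minimum or maximum of $w$ could sit at a point about which you know nothing, and the argument stalls. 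The paper resolves precisely this by choosing the background shock at the \emph{minimum} $\theta_b=\min_\varphi\theta_s$, so that the comparison domain is $\Omega^+$ itself with inner boundary equal to $S$, and then proving the key claim $\psi=\phi_b-\phi\le 0$ on $S$ (with equality only at the minimizing $\varphi_0$, and strict monotonicity of $\psi$ in $\theta$ along $S$) by an explicit integral comparison: continuity of the potential across the shock plus uniqueness of the supersonic flow give $\psi|_S=\int_{\theta_b}^{\theta_s(\tilde\varphi)}(u_b-\tilde u_b)\,\dd\theta$ with $\tilde u_b>c_*>u_b$. This sign and monotonicity are what locate the extremum at a critical point of $\theta_s$, where the Rankine--Hugoniot conditions give $\nabla\psi\cdot\mathbf{n}_s=0$ and Hopf's lemma finally bites. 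Without an analogue of this claim your Step 2 does not close.

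A second, related error: on $\Sigma_1$ you write $|\nabla\phi|^2=|\nabla\phi_b|^2=c_{\theta_1}^2$, i.e.\ you assume the background exit speed equals the prescribed $c_{\theta_1}$. That is the conclusion $c_{\theta_1}=u_1$ you are supposed to prove, so the ``only if'' direction becomes circular. The paper instead keeps the inhomogeneous linearized exit condition $\nabla\psi\cdot(\nabla\phi_b+\nabla\phi)=u_1^2-c_{\theta_1}^2$ and runs two cases ($u_1\ge c_{\theta_1}$ via the minimum of $\psi$, $u_1<c_{\theta_1}$ via the maximum), which simultaneously yields nonexistence when $c_{\theta_1}\neq u_1$ and uniqueness modulo translation when $c_{\theta_1}=u_1$. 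Your Steps 1 and 3 (supersonic rigidity by energy estimates; identification of $u_1$ from the conserved flux) are consistent with the paper, but the core of the proof needs the sign estimate on the shock and the two-case treatment of the exit condition.
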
	

This result especially implies that, if the pressure is posed at the exit, then the boundary value problem is ill-posed in most cases, and the special transonic shock solutions $\phi_b$  are not physically stable. Without small perturbations of the upstream supersonic flow, our proof is global and based on the maximum/comparison principle in \cite{ellipticGT} with a suitable special transonic shock solution as a comparison function. It reveals the basic uniqueness property of such transonic shock solutions.

In the rest of this paper, we establish Theorem \ref{unique} in Section \ref{sec2}, and extend the conclusion of Theorem \ref{unique} to a class of smooth two-dimensional axisymmetric manifold in $\mathbf{R}^3$ in Section \ref{sec3}.

\section{Proof of main theorem}\label{sec2}
\subsection{The non-uniqueness of background solutions}\label{subsec2.1}
Before proving Theorem \ref{unique}, we demonstrate the non-uniqueness of background solutions $\phi_b$ obtained in Lemma \ref{solvable}. Let $u_b=\partial_\theta \phi_b$, and $u_{b+}\, (u_{b-})$ be the right (left) limit of $u_b$ along $S_b$, and $c_{b+}\, (c_{b-})$ be the sonic speed corresponding to $u_{b+}\, (u_{b-})$.

For fixed $u_0$ on $\Sigma_0$, we analyze the relation between $u_1$ and $\theta_b$. By continuous dependence of initial values for solutions of ordinary differential equations, $u_1$ is a continuously differentiable function of $u_{b+}\, (\theta_b)$. The latter is also continuously differentiable with respect to $u_-(\theta_b)$ by the Rankine–Hugoniot conditions $\eqref{backrh1}$--$ \eqref{backrh2}$. Thus, the chain rule is valid, i.e.,
	\begin{align*}
		\frac{\dd u_1(\theta_b)}{\dd \theta_b }=\frac{\dd u_1(\theta_b)}{\dd u_{b+}(\theta_b) }\cdot\frac{\dd u_{b+}(\theta_b)}{\dd u_{b-}(\theta_b) }\cdot\frac{\dd u_{b-}(\theta_b)}{\dd \theta_b }.
	\end{align*}
It follows from $\eqref{u-ode}$ that the relation between $u_{b-}$ and $\theta_b$ satisfies
	\begin{align}
		\frac{\dd u_{b-}(\theta_b)}{\dd \theta_b }=\cot\theta_b\frac{c_{b-}^2u_{b-}}{u_{b-}^2-c_{b-}^2}.\label{du-dtheta}
	\end{align}
Also, using the Rankine–Hugoniot conditions \eqref{backrh1}--\eqref{backrh2}, we obtain
	\begin{align}
	\frac{\dd u_{b+}(\theta_b)}{\dd u_{b-}(\theta_b) }=\frac{(c_{b-}^2-u_{b-}^2)u_{b-}^{\gamma-2}}{(c_{b+}^2-u_{b+}^2)u_{b+}^{\gamma-2}}<0.\label{du+du-}
	\end{align}

It remains to deal with $\frac{\dd u_1(\theta_b)}{\dd u_{b+}(\theta_b)}$. Inserting $u_{b+}$ into $\eqref{uomega+}$ yields
	\begin{align*}
		(2c^2_0-(\gamma-1)u^2_{b+})(u_{b+}\sin\theta_b)^{\gamma-1}=(2c^2_0-(\gamma-1)u^2_1)(u_1\sin\theta_1)^{\gamma-1}.
	\end{align*}
Taking the derivative with respect to $u_{b+}$ on both sides of the above equation, one has
	\begin{align}
		&(\gamma-1)\Big[-2u_{b+}(u_{b+}\sin\theta_b)^{\gamma-1}\nonumber\\
		&\quad+(2c_0^2-(\gamma-1)u_{b+}^2)(u_{b+}\sin\theta_b)^{\gamma-2}
	(\sin\theta_b+u_{b+}\cos\theta_b\frac{\dd \theta_b}{\dd u_{b+}})\Big]\nonumber\\
		=&\Big[-2(\gamma-1)u_1(u_1\sin\theta_1)^{\gamma-1}\nonumber\\
		&\qquad+(2c_0^2-(\gamma-1)u_1^2)(\gamma-1)(u_1\sin\theta_b)^{\gamma-2}\sin\theta_1 \Big]\frac{\dd u_1}{\dd u_{b+}}.\label{u+u1}
	\end{align}
Besides, according to $\eqref{du-dtheta}$--$\eqref{du+du-}$, we have
	\begin{align*}
		\frac{\dd u_{b+}(\theta_b)}{\dd \theta_b}=\frac{\cos\theta_b(-c^2_{b-}u_{b-})u^{\gamma-2}_{b-}}{\sin\theta_b(c^2_{b+}-u^2_{b+})u^{\gamma-2}_{b+}},
	\end{align*}
which implies
	\begin{align*}
		\frac{\dd \theta_b}{\dd u_{b+}}=\frac{\sin\theta_b(c^2_{b+}-u^2_{b+})u^{\gamma-2}_{b+}}{\cos\theta_b(-c^2_{b-}u_{b-})u^{\gamma-2}_{b-}}.
	\end{align*}	
After a simple calculation, the left side of $\eqref{u+u1}$ changes into 
		\begin{align}\label{*}
		2(\gamma-1)(u_{b+}\sin\theta_b)^{\gamma-2}\sin\theta_b\cdot\Big[c^2_{b+}\big(1-\frac{c^2_{b+}-u^2_{b+}}{c^2_{b-}}\cdot\frac{u^{\gamma-1}_{b+}}{u^{\gamma-1}_{b-}}\big)-u^2_{b+}\Big]=0.
	\end{align}
Here, for the identity, we have used the relation $c_{b-}^2u_{b-}^{\gamma-1}=c_{b+}^2u_{b+}^{\gamma-1}$ by the Rankine–Hugoniot condition \eqref{backrh1}. On the other hand, the algebraic expression before $\frac{\dd u_1}{\dd u_{b+}}$ is positive on the right side of $\eqref{u+u1}$. Thus, there is
	\begin{align}\label{du1dub+}
	\frac{\dd u_1(\theta_b)}{\dd u_{b+}(\theta_b) }=0.
	\end{align}

Combined with \eqref{du-dtheta}--\eqref{du+du-} and \eqref{du1dub+}, it holds
	\begin{align}
		\frac{\dd u_1(\theta_b)}{\dd \theta_b }\equiv 0, \quad\forall ~\theta_b\in [\theta_0, \theta_1],
	\end{align}
which means that $u_1$ is independent of  $\theta_b$. In other words, even if boundary conditions are given on $\Sigma_0$ and $\Sigma_1$, the shock  $S_b$ (corresponding to $\phi_b$) can translate in the  $\theta$-direction. Therefore, there is no uniqueness of $\phi_b$, for which reason we can construct a class of transonic shock solutions modulo a translation in the $ \theta$-direction of $\theta_b\in[\theta_0,\theta_1]$. 
\begin{remark}\label{keypoint}
   For the background solution, by Bernoulli's law, we find that the problem $\eqref{phi-ode}$--$\eqref{phi-u1}$ is equivalent to the following equations for the unknowns $(u_{b-},u_{b+},\theta_b, u_1):$
		\begin{align*}
		\rho_0u_0\sin\theta_0&=\rho_{b-}u_{b-}\sin\theta_b,\\
        \rho_{b-}u_{b-}\sin\theta_b&=\rho_{b+}u_{b+}\sin\theta_b,\\
			\rho_{b+}u_{b+}\sin\theta_b&=\rho_1u_1\sin\theta_1,   
		\end{align*}			
    where $u_0, \theta_0, \theta_1$ are given constants, and $\rho_{b-}, \rho_{b+}, \rho_1$ are respectively governed by \eqref{rho with phi} respect to $u_{b-}, u_{b+}, u_1$. Under the requirement $u_1<c_*$, the relation $\rho_0u_0\sin\theta_0=\rho_1u_1\sin\theta_1$ holds and yields a unique solution for $u_1$. Furthermore, for any $\theta_b\in[\theta_0,\theta_1]$, there always exists a unique pair of  $(u_{b-},u_{b+})$ satisfying the above equations with $u_{b-}>c_*>u_{b+}$ $($as the calculation in \cite[Lemma 2.1]{tssCF}$)$. Hence, $\theta_b$ is undetermined.
	\end{remark}
\begin{remark}
    The analysis of the non-uniqueness of background solutions fits the result in \cite{tssCF} that for potential flow in straight divergent nozzles with given uniform supersonic upstream flow at the entry, there exists a unique uniform pressure at the exit such that the location of the shock varies in the nozzle, even if the nozzle is not straight here.
\end{remark}
    
\subsection{Proof of Theorem \ref{unique}}\label{subsec2.2}
    %Uniqueness
Let $\phi$ be a transonic shock solution of problem $\eqref{p1}$--$\eqref{p1end}$ in the sense of Definition \ref{tss}, and let $S$ be the corresponding shock given in Definition \ref{tss}. Then there is a  $\varphi_0$ such that $\theta_s(\varphi_0)=\min\{\theta_s(\varphi)~|~\varphi\in[0,2\pi]\}$. From the discussion in Subsection \ref{subsec2.1}, there is a  background transonic shock solution $\phi_b$ with corresponding shock $S_b\coloneqq\{(\theta_{b},\varphi)\}=\{(\theta_{s}(\varphi_0),\varphi)\}$. By standard energy estimates  for hyperbolic equations, there exists a unique supersonic flow solution $\phi$ satisfying $\eqref{p1}$--$\eqref{sigma0con}$ in the class of $C^2$ supersonic solutions in $\Omega^-$ (the existence follows from problem \eqref{u-ode}--\eqref{u0}). Hence, it remains to consider the solution in the subsonic region.

Let $\Omega^+ $ and $\Omega^{+}_{b}$ be the corresponding subsonic regions of $\phi$ and $\phi_b$, and then let $\Omega^{*}\coloneqq ~\Omega^+ \cap \Omega^{+}_{b}= \Omega^+$ be the common domain. Set
		\begin{align*}
		\psi=\phi_b-\phi
	\end{align*}
in $\Omega^*$. From $\eqref{mass conservation}$, $\psi$ satisfies the following linear equation:
	\begin{align}
		L\psi&=\di (\rho_b\nabla \phi_b)-\di(\rho\nabla \phi)\nonumber\\
		&=\sum_{i,j=1}^{2}a^{ij}(\phi_b)\partial_{ij}\psi +b^1\partial_1\psi +b^2\partial_2\psi=0\quad\text{in}~\Omega^*,
	\end{align}
where
	\begin{align*}
	&\partial_1=\partial_\theta, \quad \partial_2=\partial_\varphi,\\
	&a^{11}=\sin^2\theta(c_b^2-(\partial_{\theta}\phi_b)^2),\\
	&a^{12}=a^{21}=-\partial_\theta\phi_b\partial_\varphi\phi_b=0,\\
	&a^{22}=c_b^2-\frac{1}{\sin^2\theta}(\partial_\varphi\phi_b)^2=c^2_b,
	\end{align*}
with $c_b=\sqrt{c^2_0-\frac{\gamma-1}{2}(\partial_{\theta}\phi_b)^2}$ being the sonic speed corresponding to $\phi_b$, and
\begin{align}
  b^{1}= & -\left(\frac{\gamma+1}{2} \sin^{2}\theta  \partial_{\theta \theta} \phi+\frac{\gamma-1}{2} \partial_{\varphi \varphi} \phi\right) \partial_{\theta}\left(\phi+\phi_{b}\right) \nonumber\\& +\sin\theta \cos \theta \left(c_{b}^{2}+\frac{\gamma-1}{2} \partial_{\theta} \phi \partial_{\theta}\left(\phi+\phi_{b}\right)\right), \\
  b^{2}= & \frac{\gamma-1}{2} \partial_{\varphi} \phi \partial_{\theta\theta} \phi+2 \partial_{\theta} \phi \partial_{\theta\varphi} \phi+\frac{\gamma+1}{2} \frac{1}{\sin^{2}\theta} \partial_{\varphi} \phi \partial_{\varphi\varphi} \phi \nonumber\\& +\frac{\gamma-3}{2} \cot\theta \partial_{\theta} \phi \partial_{\varphi} \phi .
\end{align}
Owing to the boundedness of $\Omega$ and the assumption $\phi \in C^2$, we know that $|\nabla^2\phi|$ is bounded in $\Omega^*$. Moreover, since $\phi_b$ is strictly subsonic, the operator $L$ is uniformly elliptic in $\Omega^*$.

By the continuity of $\phi$ and $\phi_b$ on $S$, the boundary conditions for $\psi$ are
	\begin{align}
	|\nabla \phi_b|^2-|\nabla \phi|^2&=\nabla\psi\cdot(\nabla\phi_b+\nabla\phi)=u_1^2-c_{\theta_1}^2\quad\text{on}~\Sigma_1,\label{sigma1 condition}\\
    \psi&=\phi_b-\phi\quad\text{on}~S.
	\end{align}
Notice that in \eqref{sigma1 condition}, the dot represents inner product on the sphere. Before proceeding further, we claim here that
	\begin{align}\label{condition on S}
		\psi\leq0\quad\text{on}~S.
	\end{align}
For any fixed $\tilde{\varphi}\in [0,2\pi]$, by analysis in Subsection \ref{subsec2.1}, there exists a background solution $\tilde{\phi}_b$ such that the corresponding shock $\tilde{S}_b\coloneqq\{(\tilde{\theta}_b,\varphi)\}=\{(\theta_s(\tilde{\varphi}),\varphi)\}$. Recalling the assumption of $S_b$, we have $\tilde{\theta}_b=\theta_s(\tilde{\varphi})\geq \theta_{s}(\varphi_0)=\theta_b$. Besides, the uniqueness of the solution in supersonic region and continuity of the potential function across a shock imply 
		\begin{align*}		&\phi(\theta_s(\tilde{\varphi}),\tilde{\varphi})=\tilde{\phi}_b(\tilde{\theta}_b)=\phi_b(\theta_b)+\int_{\theta_b}^{\tilde{\theta}_b}\tilde{u}_b~\dd\theta,\\		&\phi_b(\theta_s(\tilde{\varphi}))=\phi_b(\theta_b)+\int_{\theta_b}^{\tilde{\theta}_b}u_b~\dd\theta.
		\end{align*}
Here $\tilde{u}_b=\partial_\theta \tilde{\phi}_b$, and ${u}_b=\partial_\theta {\phi}_b$.  
Thus, noting $\tilde{u}_b(\theta)>c_*>u_b(\theta)$ for $\theta\in(\theta_b,\tilde{\theta}_b)$, we obtain
		\begin{align}\label{intcondition on s}
			\psi|_{S}=(\phi_b-\phi)|_{\theta=\theta_s}=\int_{\theta_b}^{\tilde{\theta}_b}(u_b-\tilde{u}_b)~\dd \theta \leq0.
			\end{align}
The claim is proved.
		
		\begin{remark}\label{psie0}
				In $\eqref{intcondition on s}$, the equality holds if and only if $\theta_b=\tilde{\theta}_b$, i.e., $\tilde{\varphi}=\varphi_0$.
		\end{remark}
		
We now prove that $\psi \equiv 0$ in $\Omega^*$. By Remark \ref{psie0}, it suffices to show that $\psi$ is a constant, which can be divided into two cases.

$\mathbf{Case~1.}$ 
$u_1\geq c_{\theta_1}$. By the strong maximum principle, $m = \min_{\Omega^*} \psi$ can be achieved only on $\partial\Omega^*$ unless $\psi$ is a constant, where $\partial \Omega^*=S\cup \Sigma_1$.

(\romannumeral1) Suppose that $m$ is achieved on $\Sigma_1 $. It follows from $\eqref{sigma1 condition}$ that
%\begin{align}
%	\nabla\psi\cdot(\nabla\phi_b+\nabla\phi)=u^2_1-c^2_{\theta_1}\geq 0,
%\end{align} 
\begin{align}
	(\partial_\theta \psi)\partial_\theta(\phi_b+\phi)\geq\frac{1}{\sin^2\theta}(\frac{\partial\phi}{\partial\varphi})^2\geq0,
\end{align}
which induces $\partial_\theta \psi\geq0$ by $\eqref{h1}$. This contradicts the Hopf boundary-point lemma unless $\psi$ is a constant.

(\romannumeral2) Suppose that $m$ is achieved on $S$. Let $m=\psi(\theta_s(\hat{\varphi}),\hat{\varphi})$ for some $\hat{\varphi}\in[0,2\pi]$. It follows that $\theta'_s(\varphi)|_{\varphi=\hat{\varphi}}=0$. If this deduction were false, then
for any fixed $\delta>0$, there exists $\bar{\varphi}\in (\hat{\varphi}-\delta, \hat{\varphi}+\delta)$ such that $\theta_{s}(\bar{\varphi})>\theta_s(\hat{\varphi})$. This indicates (see Remark \ref{rem24} below)
\begin{align}\label{contradiction1}
	\psi(\theta_{s}(\bar{\varphi}),\bar{\varphi})-\psi(\theta_s(\hat{\varphi}),\hat{\varphi})<0,
\end{align} 
which contradicts the definition of $m$. 

Noting that the unit outer normal vector of $\Omega^*$ on $S$ is $\mathbf{n}_s=(-1,\theta'_s(\varphi))$, which is $(-1,0)$ if $\varphi=\hat{\varphi}$, then by the Rankine–Hugoniot condition $\eqref{rh1}$ and Bernoulli's law $\eqref{potential flow bernoulli}$, as in the proof of Lemma \ref{solvable}, we can get
\begin{align}
	\partial_\theta\phi(\hat{\varphi})=\partial_\theta\phi_b(\hat{\varphi}).
\end{align}
Thus, there is
\begin{align}
	\nabla\psi(\theta_s(\hat{\varphi}),\hat{\varphi})\cdot \mathbf{n}_s|_{\varphi=\hat{\varphi}}=0,
\end{align}
which also contradicts the Hopf boundary-point lemma unless $\psi$ is a constant.

$\mathbf{Case~2. }$
$u_1 < c_{\theta_1}$. It is similar to the analysis in Case 1 that the maximum $M$ of $\psi$ in $\Omega^*$ can be achieved only on $S$ unless $\psi$ is a constant. Thanks to $\eqref{condition on S}$, $M = 0$ and we can also get $\nabla \psi(\theta_s(\varphi_0),\varphi_0)\cdot\mathbf{n}_s|_{\varphi=\varphi_0} = 0$, a contradiction to the Hopf lemma unless $\psi$ is constant.

\begin{remark} \label{rem24}%remark the sign of $\eqref{contradiction1}$
	We explain the inequality $\eqref{contradiction1}$ briefly. To this end, we calculate $\psi$ on $S$. Similar to the deducing of  \eqref{intcondition on s}, we have 	
	\begin{align*}
		\psi(\theta_{s}(\bar{\varphi}),\bar{\varphi})=\int^{\bar{\theta}_b}_{\theta_b}(u_b-\bar{u}_b)~\dd\theta,\\
		\psi(\theta_s(\hat{\varphi}),\hat{\varphi})=\int^{\hat{\theta}_b}_{\theta_b}(u_b-\hat{u}_b)~\dd\theta,
	\end{align*}
	where $\bar{\theta}_b=\theta_s(\bar{\varphi})$ and $\hat{\theta}_b=\theta_s(\hat{\varphi})$, with corresponding velocity $\bar{u}_b$ and  $\hat{u}_b$. Then, it follows from $\theta_{s}(\bar{\varphi})>\theta_s(\hat{\varphi})$ that
	\begin{align*}
		\psi(\theta_{s}(\bar{\varphi}&),\bar{\varphi})-\psi(\theta_s(\hat{\varphi}),\hat{\varphi})=\int^{\bar{\theta}_b}_{\theta_b}(u_b-\bar{u}_b)~\dd\theta-\int^{\hat{\theta}_b}_{\theta_b}(u_b-\hat{u}_b)~\dd\theta\\
		&=\int^{\hat{\theta}_b}_{\theta_b}(u_b-\bar{u}_b)~\dd\theta-\int^{\hat{\theta}_b}_{\theta_b}(u_b-\hat{u}_b)~\dd\theta +\int^{\bar{\theta}_b}_{\hat{\theta}_b}(u_b-\bar{u}_b)~\dd\theta\\
		&=\int^{\hat{\theta}_b}_{\theta_b}(\hat{u}_b-\bar{u}_b)~\dd\theta+\int^{\bar{\theta}_b}_{\hat{\theta}_b}(u_b-\bar{u}_b)~\dd\theta.
	\end{align*}
	Note that $\hat{u}_b=\bar{u}_b>c_*$ with $\theta\in(\theta_b,\hat{\theta}_b)$ and $u_b<c_*<\bar{u}_b$ with $\theta\in(\hat{\theta}_b,\bar{\theta}_b)$. As in the proof of $\eqref{condition on S}$, we get $\eqref{contradiction1}$. Furthermore, we deduce that $\psi$  strictly decreases with respect to $\theta$ on $S$.
\end{remark}

We conclude from the above discussion that $\psi\equiv 0$ in $\Omega^*$. Thus, by Remark \ref{psie0}, it must hold that $\theta_s(\varphi)\equiv \theta_b$, and $\Omega^+=\Omega_b^+$. It follows that for $c_{\theta_1}\neq u_1$, there is no solution, and for $c_{\theta_1}= u_1$, the solution $\phi$ is unique, which is exactly $\phi_b$, as known from the previous subsection. Hence, for $c_{\theta_1}= u_1$, the solution to problem $\eqref{p1}$--$\eqref{p1end}$ is unique modulo a translation in the $\theta$-direction.

The proof of Theorem \ref{unique} has been completed.

\section{Extension of  Theorem \ref{unique}}\label{sec3}
In this section, we still consider the steady potential flow on a smooth axisymmetric surface in $\mathbf{R}^3$. Let $\mathbf{S}^1$ be the standard unit circle in $\mathbf{R}^2$, and $\mathcal{M}$ be the Riemannian manifold $\{(x^1, x^2)\in [0, 1]\times \mathbf{S}^1 \}$ with a metric
$G=g_{ij}\dd x^i\otimes\dd x^j=\dd x^1\otimes\dd x^1+n(x^1)^2~\dd x^2\otimes\dd x^2$. Here $n(\cdot)$ is a positive smooth function on $[0,1]$.
Without loss of generality, by replacing $\sin\theta$ with $n(x^1)$ and $\cot\theta$ with $n^\prime(x^1)/n(x^1)$ in \eqref{potential flow in s2}, we recover the equation governing steady potential flows in $\mathcal{M}$:
\begin{align}\label{potential flow in M}
	&(c^2-\partial^2_1 \phi)\partial_{11}\phi+(c^2-\frac{1}{n(x^1)^2 }\partial^2_2\phi)\frac{1}{n(x^1)^2 }\partial_{22}\phi\nonumber\\
	&-2\frac{1}{n(x^1)^2 }\partial_{1}\phi\partial_{2}\phi\partial_{12}\phi+\frac{n'(x^1)}{n(x^1)}(c^2+\frac{1}{n(x^1)^2}\partial^2_2\phi)\partial_{1}\phi=0,
\end{align}
where $\partial_i~(i=1,2)$ denotes the partial derivatives with respect to the local coordinates $x^i$ respectively. The boundary value problem in $\mathcal{M}$ under consideration can be formulated as
	\begin{align}\label{p1*}
			\eqref{potential flow in M} \quad &\text{in}~\Omega,\\
		\nabla\phi=(u_0,0) \quad &\text{on}~\Sigma_0,\label{sigma0con*}\\
		|\nabla \phi|=c_{1 } \quad &\text{on}~\Sigma_1,\label{p1end*}
	\end{align}
with a constant $u_0>c_{*}$ and a constant $c_{1 }<c_{*}$. Corresponding to the problem on the sphere, we continue to assume the domain $\Omega= [0, 1]\times \mathbf{S}^1$, and also set entry and exit as $\Sigma_0=\{0\}\times \mathbf{S}^1$ and $\Sigma_1=\{1\}\times\mathbf{S}^1$ respectively, consisting of $\partial\Omega$, such that 
	\begin{align}\label{h1*}
		\partial_1\phi\geq0 ~\quad \text{on}\quad\partial\Omega.
	\end{align}
We are also interested in the transonic shock solution of the problem $\eqref{p1*}$--$\eqref{p1end*}$. 
	
	\begin{definition}[Transonic shock solution of potential flow in $\mathcal{M}$]\label{tss*}
		For a $C^1$ function $x^1_s(x^2)$ defined in $\Omega$, let		
		\begin{align*}
			S&=\{(x^1_s(x^2),x^2)~|~x^2\in[0,2\pi]\},\\
			\Omega^-&=\{(x^1,x^2)~|~x^1<x^1_s(x^2),~x^2\in[0,2\pi]\},\\
			\Omega^+&=\{(x^1,x^2)~|~x^1>x^1_s(x^2),~x^2\in[0,2\pi]\}.
		\end{align*}
		Then $\phi\in C^{0,1}(\Omega) \cap C^2(\Omega^-\cup~\Omega^+)$ is a transonic shock solution of $\eqref{p1*}$--$\eqref{p1end*}$ if it is supersonic in $\Omega^-$
		and subsonic in $\Omega^+$, satisfying $\eqref{potential flow in M}$ in $\Omega^-\cup~\Omega^+$ and the boundary conditions $\eqref{sigma0con*}$--$\eqref{p1end*}$ pointwise, and also the Rankine–Hugoniot conditions on $S$:
		%R-H
		\begin{align}
			(\rho(|\nabla \phi_-|)\nabla \phi_--\rho(|\nabla \phi_+|)\nabla \phi_+)\cdot \mathbf{n}_s=0,\label{rh1M}\\
			\frac{1}{2}|\nabla \phi_-|^2+\frac{c^2_-}{\gamma-1}=\frac{1}{2}|\nabla \phi_+|^2+\frac{c^2_+}{\gamma-1},\label{rh2M}
		\end{align}
		where $\phi_+\,(\phi_-)$ is the right (left) limit of $\phi$ along $S$, while $c_+\, (c_-)$ is local sonic speed corresponding to  $\phi_+\, (\phi_-)$, and $\mathbf{n}_s$ is the unit  normal vector of $S$ in $\mathcal{M}$. The curve $S$ in $\Omega$ is also called a shock.
	\end{definition}
	
	In the same way, we construct a background transonic shock solution $\phi_b$ depending only on $x^1$ with a shock $S_b$ at $x^1=x^1_b$ as in Section \ref{sec1}, and prove its non-uniqueness as in Subsection \ref{subsec2.1}. The key point of the proof lies in integrating the ordinary differential equation (similar to $\eqref{u-ode}$) with respect to $x^1$, which leads to the following relations:
	\begin{align}
		(2c^2_0-(\gamma-1)u^2)(un(x^1))^{\gamma-1}&=(2c^2_0-(\gamma-1)u^2_0)(u_0n(0))^{\gamma-1}~~\quad \text{in}~\Omega^-,\label{uomega-*}\\
		(2c^2_0-(\gamma-1)u^2)(un(x^1))^{\gamma-1}&=(2c^2_0-(\gamma-1)u^2_1)(u_1n(1))^{\gamma-1}~~\quad \text{in}~\Omega^+,\label{uomega+*}
	\end{align} 
	with the same structure as Eqs.$\eqref{uomega-}$--$\eqref{uomega+}$. Then, by analogy with the discussion in Remark \ref{keypoint}, the Rankine–Hugoniot conditions hold for any location of the shock $S_b$. 
	
	As a result, we can extend Theorem \ref{unique} to the case of the manifold $\mathcal{M}$.
	
	\begin{theorem}\label{unique*}
	Under the assumption $\eqref{h1*}$, for given $u_0\in(c_*, \sqrt{2c_0^2/\gamma-1})$, the problem $\eqref{p1*}$--$\eqref{p1end*}$ is solvable for transonic shock solutions in the sense of Definition $\ref{tss*}$ if and only if $c_{1} = u_1$, with $u_1$ being a constant in
	$(0,c_*)$ determined by $u_0, c_0$ and $ \gamma >1$. In addition, the solution is unique modulo a translation along $x^1$-direction: it is exactly $\phi_b$ with $x^1_s\equiv x^1_b \in [0,1]$.
\end{theorem}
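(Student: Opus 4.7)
The plan is to mirror the proof of Theorem \ref{unique} almost verbatim, replacing $\sin\theta$ by $n(x^1)$ and $\cot\theta$ by $n'(x^1)/n(x^1)$ throughout, and to verify that the steps which looked sphere-specific in fact only used the axisymmetry of the metric.

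The excerpt already supplies the background solutions $\phi_b$ and their non-uniqueness modulo translation in $x^1$ via the algebraic relations \eqref{uomega-*}--\eqref{uomega+*} together with the reasoning of Remark \ref{keypoint}. Granting that, I would start from a putative transonic shock solution $\phi$ with shock $S=\{(x^1_s(x^2), x^2)\}$, pick $x^2_0$ minimizing $x^1_s(\cdot)$, and lock in the background solution $\phi_b$ whose shock sits at $x^1_b = x^1_s(x^2_0)$. Uniqueness of the supersonic part in $\Omega^-$ follows from hyperbolic energy estimates with the Cauchy data \eqref{sigma0con*}; integrating along $x^1$-slices from $x^1_b$ to $x^1_s(x^2)$ and using continuity of the potential across $S$ then produces $\psi := \phi_b - \phi \leq 0$ on $S$, with equality only at $x^2 = x^2_0$, by the argument that gave \eqref{intcondition on s} and Remark \ref{rem24}.

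In $\Omega^+$, subtracting \eqref{potential flow in M} at $\phi_b$ from the same equation at $\phi$ yields a linear second-order operator $L\psi = 0$. Because $\phi_b$ depends only on $x^1$, the mixed coefficient vanishes, $a^{12} = -\partial_1\phi_b\,\partial_2\phi_b = 0$, and $a^{22} = c_b^2$, so $L$ is uniformly elliptic in $\Omega^+$ since $\phi_b$ is strictly subsonic there and $\Omega$ is bounded; the lower-order coefficients remain bounded thanks to $\phi \in C^2$. The boundary conditions for $\psi$ are $\psi \leq 0$ on $S$ (strictly decreasing in $x^1$ along $S$ by the adaptation of Remark \ref{rem24}) and the Bernoulli-type identity $\nabla\psi \cdot \nabla(\phi+\phi_b) = u_1^2 - c_1^2$ on $\Sigma_1$, which together with \eqref{h1*} controls the sign of $\partial_1\psi$ on $\Sigma_1$ exactly as on the sphere.

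I would then run the case split of Subsection \ref{subsec2.2}. If $c_1 \leq u_1$, the strong maximum principle places $\min_{\overline{\Omega^+}} \psi$ on $\partial\Omega^+$, but it cannot sit on $\Sigma_1$ by Hopf and the sign of $\partial_1\psi$, and at an interior minimizer $\hat{x}^2$ on $S$ the condition $(x^1_s)'(\hat{x}^2) = 0$ together with \eqref{rh1M}--\eqref{rh2M} forces $\partial_1\phi(\hat{x}^2) = \partial_1\phi_b(\hat{x}^2)$, hence $\nabla\psi \cdot \mathbf{n}_s = 0$, again contradicting Hopf unless $\psi$ is constant; the case $c_1 > u_1$ is symmetric via the maximum. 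So $\psi \equiv 0$ in $\Omega^+$, which forces $x^1_s(x^2) \equiv x^1_b$ by the equality case of Remark \ref{rem24}, and the theorem follows. The only real obstacle I foresee is bookkeeping: one must confirm that the vanishings $a^{12} = 0$ and $a^{22} = c_b^2$, which on $\mathbf{S}^2$ might look like coincidences of $\sin\theta$, are in fact consequences of the $x^2$-independence of $\phi_b$, so they persist for any positive smooth profile $n$ and no structural hypothesis on $n$ beyond smoothness and positivity is required.
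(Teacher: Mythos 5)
Your proposal is correct and follows essentially the same route as the paper, which proves Theorem \ref{unique*} precisely by transplanting the argument for Theorem \ref{unique} (background solutions and their translation-invariance via \eqref{uomega-*}--\eqref{uomega+*} and Remark \ref{keypoint}, the sign of $\psi$ on $S$, and the maximum principle/Hopf lemma case split) with $\sin\theta\mapsto n(x^1)$, $\cot\theta\mapsto n'(x^1)/n(x^1)$. Your observation that $a^{12}=0$ and $a^{22}=c_b^2$ stem from the $x^2$-independence of $\phi_b$ rather than any special property of $\sin\theta$ is exactly the reason the paper can omit the details.
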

The proof of Theorem \ref{unique*} is similar to that of Theorem \ref{unique} with the exchange of coordinates and the metric, so we omit the details.
   \begin{remark}
		Without loss of generality, let $n(t)$ satisfy: $(1)~n^\prime(t)>0;$ $(2)~n^{\prime\prime}(t)>0$ for $t\in[0,1]$.  Such a manifold $\mathcal{M}$ can be regarded as an approximation of a divergent two-dimensional nozzle, and the study is related to the potential flow in the divergent part of a de Laval nozzle.
    \end{remark}
    \begin{remark}
        In fact, for potential flow in this class of manifolds, the relations \eqref{uomega-}--\eqref{uomega+} combined with the Rankine-Hugoniot conditions lead to the undetermined ordinary differential equations, as noted in Remark $\ref{keypoint}$. This property results in the non-uniqueness of background solutions discussed earlier, a feature that differs from that of Euler flow in \cite{backremarkY}.
    \end{remark}
    \begin{remark}
        Theorem $\ref{unique*}$ indicates that the uniqueness of the transonic shock solution of problem $\eqref{p1*}$--$\eqref{p1end*}$ is independent of the Gaussian curvature $K=-n^{\prime\prime}(x^1)/n(x^1)$ of the two-dimensional manifold $\mathcal{M}$. 
    \end{remark}
	%\textcolor{blue}{
    
    %}
	In conclusion, we prove the uniqueness property (modulo a translation) of the transonic shock solution in the sense of Definition \ref{tss*} for such a class of two-dimensional axisymmetric manifold $\mathcal{M}$ in $\mathbf{R}^3$.

\section*{Acknowledgments}
This work is supported by the Science and Technology Commission of Shanghai Municipality under Grants No. 24ZR1420000 and No. 22DZ2229014; National Natural Science Foundation of Hubei under Grant No. 2024AFB007. The third author acknowledges the support of Tianyuan Mathematics Research Center where this work is finished.

   % \bibliographystyle{plain}
    %\bibliography{references}
\end{document}